\documentclass{amsart}
\usepackage[utf8]{inputenc}
\usepackage{amsmath,times,hyperref, multicol}
\usepackage{amsthm}
\usepackage{amssymb}
\usepackage{amsfonts}
\usepackage{latexsym}
\usepackage{graphicx}
\usepackage[usenames,dvipsnames]{color} 
\usepackage{enumitem}
\setlist[itemize]{leftmargin=*}
\setlist[enumerate]{leftmargin=*}
\usepackage[font={small}]{caption}
\usepackage{subcaption}
\usepackage{cite}
\usepackage{mathtools}
\mathtoolsset{showonlyrefs}

\usepackage{tikz,amsmath,amsfonts}
\usetikzlibrary{calc,intersections}
\usetikzlibrary{decorations.pathreplacing}
\usepackage{ifthen} 

 \newcommand{\Nbar}{{\overline{\mathbb{N}}}}
\newcommand{\R}{{\mathbb{R}}}
\newcommand{\N}{{\mathbb{N}}}
\newcommand{\C}{{\mathbb{C}}}
\newcommand{\A}{{\mathcal{A}}}

\newcommand{\no}{\mathring{n}}



\newtheorem{theorem}{Theorem}[section]
\newtheorem{proposition}[theorem]{Proposition}

\theoremstyle{definition}
\newtheorem{definition}[theorem]{Definition}

\newtheorem{convention}[theorem]{Convention}

\theoremstyle{remark}

\newtheorem{example}[theorem]{Example}

\DeclareMathOperator{\Cl}{Cl}
\DeclareMathOperator{\Ideals}{Ideals}
\allowdisplaybreaks

\numberwithin{equation}{section}

\title{The Hamming Distance and the Fell Topology  on AF Algebras}

\author{Konrad Aguilar (Pomona College) and Zo\"{e} Batterman (Pomona College)}

\address{Department of Mathematics and Statistics, Pomona College, Claremont, CA 91711}
\email{konrad.aguilar@pomona.edu}
\urladdr{https://aguilar.sites.pomona.edu/}

\address{Department of Mathematics and Statistics, Pomona College, Claremont, CA 91711}
\email{zxba2020@mymail.pomona.edu}

\thanks{The first author is supported by NSF grant DMS-2316892}
\subjclass[2020]{Primary:  46L89, 46L30, 58B34.}
\keywords{C*-algebras, AF algebras, Fell topology, metric geometry, Hausdorff distance, Hamming distance}
\date{\today}

\begin{document}

  \maketitle
\begin{abstract} 
We introduce a new metric on the ideal space of an AF algebra that metrizes the Fell topology. The novelty of this metric lies in the use of a Hamming distance type metric in its construction. Furthermore,   this metric captures more of the ideal structure of AF algebras in comparison to known metrics on the Fell topology of an AF algebra. We explicitly test this on the C$^*$-algebra of complex-valued continuous functions on a quantized interval by comparing our new metric with the dual Hausdorff distance on the ideals of this C$^*$-algebras induced by the Hausdorff distance on the closed subsets of the quantized interval.
\end{abstract}
\tableofcontents
\section{Introduction and Background} 

Noncommutative metric geometry was introduced by M.A. Rieffel to provide a mathematical framework to   certain statements arising in the high energy physics literature  about continuous families of  operator algebras \cite{Rieffel98a, Rieffel00, Rieffel01}. The main objects Rieffel developed in noncommutative metric geometry are:  noncommutative analogues of compact metric spaces called compact quantum metric spaces and a noncommutative analogue to the Gromov-Hausdorff distance called the quantum Gromov-Hausdorff distance. Many more continuity results have been established with these new tools \cite{Junge18, Kaad20, Kyed22, Latremoliere-Packer17} (to only name a few) as well as various new noncommutative analogues of the quantum Gromov-Hausdorff distance \cite{Kerr02, li03, Latremoliere13b, wu06a}. Moreover, F. Latr\'emoli\`ere  recently built a series of noncommutative analogues of the Gromov-Hausdorff distance to capture  more structure associated to C$^*$-algebras such as module structure \cite{Latremoliere16} and spectral triple structure \cite{Latremoliere22a}. 

  In 1961 \cite{Fell61}, J.M.G. Fell introduced the Fell topology on the ideal space of C$^*$-algebras and provided the foundation for the theory of continuous fields of operator algebras and C$^*$-algebras. In \cite{Aguilar16c},  the first author  found a connection between noncommutative metric geometry and the Fell topology. In particular, it was found that for certain families of AF algebras, one can find a continuous map from the induced Fell topology to the quotients (viewed as  compact quantum metric spaces) equipped with the topology induced by Latr\'emoli\`ere's noncommutative analogue to the Gromov-Hausdorff distance, the Gromov-Hausdorff propinquity \cite{Latremoliere13b}. A metric that metrized the Fell topology was introduced in \cite{Aguilar16c} to prove this result.

Even though the metric of \cite{Aguilar16c} proposed answers the original question, it does not differentiate between varied ideal structures since it only tells us when two ideals first disagree in a given inductive sequence of an AF algebra and disregards the other rich structure found by O. Bratteli \cite{Bratteli72}.

In Section \ref{s:new-metric}, we build a metric which metrizes the Fell topology and that considers more structure of the ideals by using a Hamming distance type metric at each level of the inductive sequence. We explicitly calculate the ability of our metric to that of \cite{Aguilar16c} in an example in Section~\ref{sct:section-example}. The example we consider is the space of complex-valued continuous functions on a quantized interval to allow us to compare with a classical metric, the Hausdorff distance. In fact, for any unital separable commutative C$^*$-algebra, one obtains another metric on the Fell topology by way of the Hausdorff distance on the spectrum, a dual metric on the ideals induced by the spectrum. 
In the case of AF algebras, this dual metric gives us a classical metric which allows us to compare the metric of \cite{Aguilar16c} and the new metric introduced in this article. By focusing on the quantized interval, we find certain ideals which highlight the fact our new metric sees more structure since it behaves more like the dual Hausdorff distance than the metric of \cite{Aguilar16c}.

We continue this section with the necessary background for the rest of the article.  

\begin{convention}
Let $\A$ be a C$^*$-algebra. By an ideal of $\A$, we mean a norm-closed two-sided algebraic ideal. We denote the set of ideals of $\A$ by $\Ideals(\A)$. We say $\A$ is simple if $\Ideals(\A)=\{\{0\}, \A\}$.
\end{convention}

The main example of this paper comes from the following class of C$^*$-algebras, where we also include a characterization of the ideals for an element in this class for   later use. 
\begin{example}\label{e:comm}
Let $X$ be a compact Hausdorff space. Let 
\[
C(X)\coloneqq \{f:X \rightarrow \C \mid f \text{ is continuous}\}
\]
denote the C$^*$-algebra of complex-valued continuous functions on $X$ with respect to pointwise operations; that is, the adjoint is defined by  $f^*(x)=\overline{f(x)}$ for every $f\in C(X)$ and every $x \in X$.
We denote the supremum norm for every $f \in C(X)$ by
\[
\|f\|_\infty\coloneqq \sup_{x \in X} |f(x)|.
\] 

Moreover, the ideals of $C(X)$ are in one-to-one correspondence with the closed subsets of $X$. Indeed, let $\Cl(X)$ denote the set of closed subsets of $X$. This one-to-one correspondence is given by the map
\[
\Phi_I: A \in \Cl( X) \longmapsto I(A) \in \Ideals(C(X))
\]
where
\[
I(A)=\{ f \in C(X) : f(a)=0 \text{ for all }a \in A \}.
\]
(see \cite[Chapter VII.8]{Conway90}).
\end{example}

Bratteli \cite{Bratteli72} introduced the notion of an approximately finite-dimensional C$^*$-algebra at the same time as introducing certain infinite graphs called Bratteli diagrams.  We note these diagrams were vital to our understanding of the ideal space and to the construction of our metric.

\begin{definition}[{\cite{Bratteli72}}]\label{d:af}
Let $\A$ be a unital C$^*$-algebra. We say $\A$ is an {\em AF algebra} if for every $n \in \N$, there exists a finite-dimensional unital C$^*$-subalgebra $\A_n$ of $\A$ such that
    \begin{enumerate}
        \item $\A_n \subseteq \A_{n+1}$,
        \item $\A=\overline{\cup_{n \in \N}\A_n}^{\|\cdot\|_\A}$.
    \end{enumerate}

    For each $n \in \N$, we call $\A_n$ the {\em $n$-level} of $\A=\overline{\cup_{n \in \N}\A_n}^{\|\cdot\|_\A}$, and we call $(\A_n)_{n \in \N}$ the {\em inductive sequence}.
\end{definition}

 The main example of this paper is the commutative C$^*$-algebra of complex-valued continuous functions on the quantized interval. Consider the quantized interval
\[
\Nbar =\left\{ \frac{1}{2^{n-1}} \in \R : n \in \N\right\} \cup \{0\}.
\]
Then $C\left( \Nbar\right)$
is an AF algebra since
\[
C\left( \Nbar\right)=\overline{\cup_{n \in \N} C_n}^{\|\cdot\|_\infty},
\]
where for each $n \in \N$, $C_n$ is the unital C$^*$-subalgebra of functions that are eventually constant at $2^{1-n}$, that is
\[
C_n=\left\{ f\in C(\Nbar) : \forall x \in \Nbar, x \leq 2^{1-n}, f(x)=f(2^{1-n})\right\},
\]
  which is finite-dimensional of dimension $n$ since it is the span of the characteristic functions
\[
\chi_{\{1\}}, \chi_{\{1/2\}}, \ldots, \chi_{\{2^{2-n}\}}, \chi_{[0,2^{1-n} ]}.
\]

Next, the first author \cite{Aguilar16c} introduced a metric on the ideal space of AF algebras that metrizes the Fell topology. In this paper, we see  this metric doesn't capture much information about the ideal structure. In particular, we see this happen when we explicitly compare it on $C(\Nbar)$ with another standard metric that metrizes the Fell topology induced by the Hausdorff distance. We will wait until the next section to define this Hausdorff distance  induced     metric, and for now, we introduce the metric in \cite{Aguilar16c}. But first, we define the Fell topology which begins with the Jacobson topology.

\begin{definition}\label{Jacobson-topology}Let $\A$ be a C$^*$-algebra.   Denote    the set 
\begin{equation*}
\begin{split}
 \left\{ \ker \pi \in \mathrm{Ideals}(\A) :   \pi \text{ is an irreducible $^*$-representation of }   \A \text{ with } \pi \neq 0 \right\}
\end{split}
\end{equation*}
of {\em Primitive ideals of $\A$} by  $ \mathrm{Prim}(\A)$.
Note that $\A \not\in \mathrm{Prim}(\A)$.

The {\em Jacobson topology} is the topology on $\mathrm{Prim}(\A)$, denoted by  $Jacobson$,  such that a set $F$ is closed if there exists a unique ideal $I_F \in \mathrm{Ideals}(\A)$ such that $F=\{ J \in \mathrm{Prim}(\A) : J \supseteq I_F\}$  (see Theorem 5.4.2, Theorem 5.4.6,  and Theorem 5.4.7 in \cite{Murphy90}).
\end{definition}
Now, we may define the Fell topology on $\Ideals(\A)$, which is a topology on all ideals of a C$^*$-algebra.  However, to build this, Fell first defined a topology on closed sets of any topological space along, which we define now.
\begin{definition}[\cite{Fell62}]\label{topological-Fell-def}
Let $(X, \tau)$ be a topological space  (no further assumptions made).   Let $K$ be a compact set of $X$, and let $F$ be a finite family of non-empty open subsets of $X$. Define:
\begin{equation*}
U(K,F)=\{  Y \in \Cl(X) : Y \cap K = \emptyset  \text{ and }  Y\cap A\neq \emptyset \text{ for all } A \in F\}.
\end{equation*}  The Fell topology $\tau_{ \Cl(X)}$ on $\Cl(X)$  is generated by the topological basis:
\begin{equation*}
\left\{ U(K,F) \subseteq \Cl(X): K\subseteq X \text{ is compact} \text{ and }  F\subseteq \tau\setminus \{\emptyset\}, F \ \text{ finite}\right\}. 
\end{equation*}
\end{definition}
 
\begin{definition}[{\cite{Fell61}}]\label{Fell-topology}
Let $\A$ be a C$^*$-algebra.  Define
 \[fell : I \in  \mathrm{Ideals}(\A) \longmapsto \left\{ J \in \mathrm{Prim}(\A) : J \supseteq I\right\},\] 
which is a bijection onto  $ \Cl(\mathrm{Prim}(\A))$ by Theorem 5.4.7 in \cite{Murphy90}.

The {\em Fell topology} on $\mathrm{Ideals}(\A)$, denoted $Fell$, is the initial topology on $\mathrm{Ideals}(\A)$ induced by $fell$  and  the Fell topology on $\Cl(\mathrm{Prim}(\A))$ of Definition \ref{topological-Fell-def} induced  by  the Jacobson topology on $\mathrm{Prim}(\A)$. 

In particular,  this topology is given by: 
\begin{equation*}
 Fell=\big\{ U \subseteq \mathrm{Ideals}(\A) : U=fell^{-1} (V), V \in \tau_{\Cl(\mathrm{Prim}(\A))}\big\}.
\end{equation*}
\end{definition}

Next, we present the metric from \cite{Aguilar16c} that metrizes the Fell topology on AF algebras.
\begin{theorem}[{\cite[Theorem 3.22]{Aguilar16c}}]\label{t:a-metric}
Let $\A=\overline{\cup_{n \in \N}\A_n}^{\|\cdot\|_\A}$ be an AF algebra, where we set $\mathcal{I}=(\A_n)_{n \in \N}$. The function $d_{\varphi, \mathcal{I}}: \Ideals(\A) \times \Ideals (A) \rightarrow [0, \infty)$ defined by 
\[
d_{\varphi, \mathcal{I}}(I,J)=\begin{cases}
0 & I=J,\\
    2^{-\min \{m \in \N: I\cap \A_m \neq J \cap \A_m\}} & I \neq J
\end{cases}
\]
for every $I,J \in \Ideals (\A)$ is a metric on $\Ideals(\A)$ that metrizes the Fell topology.
\end{theorem}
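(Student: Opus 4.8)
The statement asserts two things --- that $d_{\varphi,\mathcal{I}}$ is a metric and that it induces the Fell topology --- and I would treat them in that order. For the metric axioms, the only point needing care is that $\min\{m\in\N : I\cap\A_m \neq J\cap\A_m\}$ is taken over a nonempty set whenever $I\neq J$; this follows from the standard AF fact that every ideal satisfies $I=\overline{\bigcup_{n\in\N}(I\cap\A_n)}^{\|\cdot\|_\A}$, so that agreement of $I$ and $J$ at every finite level forces $I=J$. Symmetry and the condition $d_{\varphi,\mathcal{I}}(I,J)=0\iff I=J$ are immediate, and for the triangle inequality I would prove the stronger ultrametric inequality: writing $m(I,J)$ for the first level of disagreement, if $m^\ast=\min\{m(I,K),m(K,J)\}$ then $I,K$ and $K,J$ all agree on $\A_m$ for every $m<m^\ast$, hence so do $I,J$, giving $m(I,J)\geq m^\ast$ and therefore $d_{\varphi,\mathcal I}(I,J)\leq\max\{d_{\varphi,\mathcal I}(I,K),d_{\varphi,\mathcal I}(K,J)\}$.

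For the topological claim I would first record the exact shape of the metric balls. Because $\A_m\subseteq\A_M$ for $m\le M$, agreement of two ideals through level $M$ is the same as agreement at level $M$ alone, so the open ball of radius $2^{-M}$ about $I$ is the ``cylinder''
\[
C(K):=\{J\in\Ideals(\A) : J\cap\A_M=K\},\qquad K:=I\cap\A_M .
\]
Since $\A_M$ is finite dimensional it has only finitely many ideals, so for each $M$ these cylinders form a finite partition of $\Ideals(\A)$, and the collection of all of them (over all $M$ and $K$) is a basis for the metric topology. The plan is then to show each cylinder is clopen in the Fell topology and to finish by a compactness argument rather than by comparing the two topologies set-by-set.

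The heart of the argument is showing $C(K)$ is Fell-open and Fell-closed. An ideal of the finite-dimensional algebra $\A_M$ is determined by the set of minimal central projections it contains, so letting $p_1,\dots,p_r$ denote these projections I can write $C(K)$ as the finite intersection of the sets $\{J : p_i\in J\}$ over those $i$ with $p_i\in K$, together with the sets $\{J : p_i\notin J\}$ over the remaining $i$. It therefore suffices to prove that, for a projection $p\in\A$, the set $\{J : p\in J\}$ is Fell-clopen. Here I would use two facts about $\mathrm{Prim}(\A)$: every closed ideal is the intersection of the primitive ideals containing it, so $p\in J\iff fell(J)\subseteq\{P : p\in P\}$; and the set $K_p:=\{P\in\mathrm{Prim}(\A) : p\notin P\}$ is both open (it is the complement of the hull $\{P : P\supseteq\langle p\rangle\}$) and compact (because $P\mapsto\|p+P\|$ is lower semicontinuous with values in $\{0,1\}$, so $K_p=\{P:\|p+P\|\geq 1\}$ is a compact superlevel set). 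Consequently $\{J:p\in J\}=fell^{-1}(U(K_p,\emptyset))$ and $\{J:p\notin J\}=fell^{-1}(U(\emptyset,\{K_p\}))$ are both Fell-open, so $\{J:p\in J\}$, and hence $C(K)$, is Fell-clopen.

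Finally I would assemble these pieces. Since every metric-basic open set $C(K)$ is Fell-open, the metric topology is contained in the Fell topology, i.e.\ the identity map from $\Ideals(\A)$ with the Fell topology to $\Ideals(\A)$ with $d_{\varphi,\mathcal I}$ is continuous. The Fell topology on $\Ideals(\A)$ is compact, as $fell$ is a homeomorphism onto $\Cl(\mathrm{Prim}(\A))$ and the Fell topology on the closed subsets of any (locally compact) space is compact by \cite{Fell62}, while the metric topology is Hausdorff. A continuous bijection from a compact space to a Hausdorff space is a homeomorphism, so the two topologies coincide. The main obstacle, and where I would spend the most care, is the third step --- specifically verifying that $K_p$ is simultaneously open and compact in the Jacobson topology, since this compact-open dichotomy for projections is exactly what lets the single finite level $\A_M$ ``see'' a Fell-clopen piece of the ideal space; everything else is either routine or a direct citation.
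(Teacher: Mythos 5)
The paper does not actually prove this theorem: it is imported verbatim from \cite[Theorem 3.22]{Aguilar16c} as background, so there is no in-paper argument to compare yours against. Judged on its own merits, your proof is correct and essentially self-contained. The metric axioms via the ultrametric inequality are fine (including the needed observation that agreement at every finite level forces $I=J$); the identification of the open balls with the cylinders $\{J\in\Ideals(\A) : J\cap\A_M=I\cap\A_M\}$ is right and uses the nesting $\A_m\subseteq\A_M$ exactly where needed; the reduction of a cylinder to finitely many conditions of the form $p_i\in J$ or $p_i\notin J$ via the minimal central projections of $\A_M$ is right; and the hull-kernel translation $p\in J \iff fell(J)\cap K_p=\emptyset$, combined with the basic sets $U(K_p,\emptyset)$ and $U(\emptyset,\{K_p\})$, correctly yields Fell-clopenness. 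Your closing step (a continuous bijection from a compact space to a Hausdorff space is a homeomorphism) is the same device this paper itself uses to prove Theorem \ref{t:metrize-Fell} for $d_{\beta,\mathcal{I}}$, so the assembly is certainly in the spirit of the paper.

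One caveat, precisely at the step you flag as the heart of the argument: your justification of the compactness of $K_p=\{P\in\mathrm{Prim}(\A) : p\notin P\}$ does not work as stated. Lower semicontinuity of $P\mapsto\|p+P\|$ gives openness of sets of the form $\{P : \|p+P\|>c\}$ (so, since the function is $\{0,1\}$-valued, it re-proves that $K_p$ is open), but it gives no control whatsoever on compactness; LSC functions in general have superlevel sets that are neither closed nor compact. The compactness you need is a genuinely separate standard theorem: for every $a\in\A$ and $\epsilon>0$, the set $\{P\in\mathrm{Prim}(\A) : \|a+P\|\geq\epsilon\}$ is compact in the Jacobson topology (this is in Dixmier's book on C$^*$-algebras, Section 3.3; see also \cite{Pedersen79}). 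With that citation substituted for the faulty parenthetical, your proof stands as a complete and correct alternative to citing \cite{Aguilar16c}.
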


Thus, our main goal is to introduce a metric that sees more of the ideal structure of an AF algebra. To accomplish this, we need a little more information about the structure of AF algebras. By \cite[Theorem III.1.1]{Davidson}, we have that any finite-dimensional C$^*$-algebra is  a finite direct sum of simple C$^*$-subalgebras. In fact, any simple finite-dimensional C$^*$-algebra is $^*$-isomorphic to the algebra of $n\times n$-matrices with complex entries, $M_n(\C)$. Therefore given an AF algebra
\[
\A=\overline{\cup_{n \in \N}\A_n}^{\|\cdot\|_\A},
\]
for each $n \in \N$, there exists $ n_n\in \N$ such that 
\[
\A_n=\bigoplus_{k=1}^{n_n} \A_{n,k},
\]
where $\A_{n,k}$ is a simple C$^*$-subalgebra of $\A_n$ for each $k \in \{1,2,\ldots, n_n\}$.  Our new metric will be built with this decomposition of each $\A_n$ into simple summands, whereas  the metric of \cite{Aguilar16c} does not utilize this in any way in its   construction. 
Relating back to our main example, for each $n \in \N$, we have
\begin{equation}\label{eq:nbar}
    C_n=  
\bigg(\bigoplus_{k=1}^{n-1} \C \chi_{\{2^{-[1-k]}\}} \bigg) \oplus \C\chi_{[0,2^{1-n}]},
\end{equation}
where $\C f=\{\lambda f \in C(\Nbar): \lambda \in \C\}$ for every $f \in C(\Nbar)$. We set 
\[C_{n,k}=\C \chi_{\{2^{-[1-k]}\}},
\]
for $k=1, 2, \ldots, n-1$ and set 
\[C_{n,n} = \C\chi_{[0,2^{1-n}]},
\]
and we use this ordering of the summands for the rest of the paper.

With this detail, we can describe an important property of the ideals of an AF algebra that will allow us to build this a more refined metric.
\begin{theorem}[{\cite[First sentence of proof of Theorem III.4.2]{Davidson}}]\label{t:ideal-af}
    Let $\A=\overline{\cup_{n \in \N}\A_n}^{\|\cdot\|_\A}$ be an AF algebra. If $I \in \Ideals(\A)$, then for each $n \in \N$, we have 
    \[
    I\cap \A_n=\bigoplus_{k \in \no(I)} \A_{n,k},
    \]
    where $\no(I) \subseteq \{1,2,\ldots, n_n\},$ and we order $\no(I)=\{\no(I,1), \no(I,2), \ldots, \no(I,n_I)\}$, where $n_I\leq n.$
\end{theorem}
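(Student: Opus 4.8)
The plan is to combine two observations: that intersecting a closed two-sided ideal of $\A$ with the finite-dimensional subalgebra $\A_n$ produces a closed two-sided ideal of $\A_n$, and that the ideals of a finite-dimensional C$^*$-algebra are precisely the direct sums of its simple summands. The set $\no(I)$ will then be read off as the set of indices of the summands that survive inside $I$.

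First I would check that $I \cap \A_n \in \Ideals(\A_n)$. It is norm-closed, being the intersection of the norm-closed sets $I$ and $\A_n$, and it is clearly a linear subspace. For the ideal property, take $a \in I\cap\A_n$ and $b \in \A_n$: then $ab, ba \in I$ since $I$ is a two-sided ideal of $\A$ and $a\in I$, while $ab, ba \in \A_n$ since $\A_n$ is a subalgebra and $a, b \in \A_n$; hence $ab, ba \in I \cap \A_n$, as required.

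Next I would classify $\Ideals(\A_n)$ using the decomposition $\A_n = \bigoplus_{k=1}^{n_n}\A_{n,k}$ into simple summands. Let $e_k$ denote the unit of $\A_{n,k}$; these are mutually orthogonal projections, central in $\A_n$, with $\sum_{k=1}^{n_n} e_k = 1_{\A_n}$. For any $J \in \Ideals(\A_n)$ and any $a \in J$, centrality of $e_k$ gives $e_k a = e_k a e_k \in J$, and $e_k a = e_k a e_k \in e_k\A_n e_k = \A_{n,k}$, so $e_k a \in J \cap \A_{n,k}$; summing $a = \sum_{k=1}^{n_n} e_k a$ then yields $J = \bigoplus_{k=1}^{n_n} (J \cap \A_{n,k})$. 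Each $J \cap \A_{n,k}$ is a closed two-sided ideal of $\A_{n,k}$, and since $\A_{n,k}$ is simple (being $^*$-isomorphic to some $M_{d_k}(\C)$) it equals either $\{0\}$ or $\A_{n,k}$. Thus every ideal of $\A_n$ has the form $\bigoplus_{k \in S}\A_{n,k}$ for some $S \subseteq \{1,\ldots,n_n\}$.

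Finally, applying this to $J = I \cap \A_n$, I would set
\[
\no(I) = \left\{k \in \{1,\ldots,n_n\} : \A_{n,k} \subseteq I\right\},
\]
which gives the desired decomposition $I \cap \A_n = \bigoplus_{k\in\no(I)}\A_{n,k}$; the stated ordering of $\no(I)$ and the cardinality bound on $n_I = |\no(I)|$ are then immediate bookkeeping. There is no deep obstacle here: the only essential input is the simplicity of the summands $\A_{n,k}$, which forces the restriction of any ideal to each summand to be trivial. The one point requiring care is confirming that an arbitrary ideal of $\A_n$ respects the central decomposition, and this is exactly where the centrality of the units $e_k$ is used.
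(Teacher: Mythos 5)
Your proposal is correct, and it is essentially the argument the paper relies on: the paper does not prove this statement itself but cites it to Davidson, and your proof (intersecting the ideal with $\A_n$, cutting down by the central units $e_k$ of the simple summands, and invoking simplicity of each $\A_{n,k}$) is precisely the standard argument behind that citation. The only caveat is your final remark that the bound $n_I \leq n$ is ``immediate bookkeeping'': your argument yields $n_I \leq n_n$ (the number of simple summands of $\A_n$), and the stronger bound $n_I \leq n$ stated in the theorem only holds under the implicit assumption $n_n \leq n$, which is true in the paper's main example $C(\Nbar)$ but not for an arbitrary inductive sequence of an AF algebra; this imprecision lies in the paper's statement rather than in your proof.
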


\section{A new metric on the ideal space of AF algebras}\label{s:new-metric}

Building immediately from Theorem \ref{t:ideal-af}, we may define our new metric. The idea behind the new metric is that it captures how distinct the ideals are at every level of the AF algebra, whereas the metric of $d_{\varphi, \mathcal{I}}$ of Theorem \ref{t:a-metric} only looks at the first level where the ideals disagree.  Our approach uses part of the construction of the Hamming distance at each level. Indeed, given a vector (in $\R^n$, for example), the Hamming distance detects every coordinate in which the vectors disagree and then takes the cardinality of the number of coordinates that disagree \cite{Hamming50}. In our construction, we  view the ideals at each level as vectors motivated by Bratteli diagrams of ideals \cite[Theorem 3.3]{Bratteli72} and then detect where they disagree. Then we combine this information in the following way to form a metric  on  of the set of ideals of an AF algebra.  

\begin{definition}
    Let $\A=\overline{\cup_{n \in \N}\A_n}^{\|\cdot\|_\A}$ be an AF algebra, where we set $\mathcal{I}=(\A_n)_{n \in \N}$. For every $I,J \in \Ideals (\A)$ and every $n \in \N$, we set
    \[
    \no(I,J)=\no(I)\triangle\no(J),
    \]
    where $\triangle$ denotes symmetric difference. 
    We define for every $I,J \in \Ideals (\A)$
    \[
    d_{\beta, \mathcal{I}}(I,J)\coloneqq \sum_{n=1}^\infty \sum_{k \in \no(I,J)} 2^{-[n+k]},
    \]
    with the convention that $\sum_{k \in \emptyset}(\cdot ) =0.$
\end{definition}

\begin{theorem}
     Let $\A=\overline{\cup_{n \in \N}\A_n}^{\|\cdot\|_\A}$ be an AF algebra, where we set $\mathcal{I}=(\A_n)_{n \in \N}$. The function $ d_{\beta, \mathcal{I}}$ is a metric on $\Ideals(\A)$.
\end{theorem}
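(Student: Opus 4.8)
The plan is to verify the four defining properties of a metric: non-negativity together with finiteness, symmetry, the identity of indiscernibles, and the triangle inequality. Throughout, I keep in mind that in the expression $d_{\beta, \mathcal{I}}(I,J)=\sum_{n=1}^\infty \sum_{k \in \no(I,J)} 2^{-[n+k]}$ the set $\no(I,J)=\no(I)\triangle\no(J)$ is understood \emph{at level $n$}, i.e.\ determined by the decompositions $I\cap\A_n=\bigoplus_{k\in\no(I)}\A_{n,k}$ and $J\cap\A_n=\bigoplus_{k\in\no(J)}\A_{n,k}$ supplied by Theorem \ref{t:ideal-af}. The easy axioms I would dispatch first. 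Non-negativity is immediate since each summand $2^{-[n+k]}$ is positive. For finiteness I would use $\no(I,J)\subseteq\{1,\dots,n_n\}$ at level $n$ to bound each inner sum by $\sum_{k\in\no(I,J)}2^{-[n+k]}\leq 2^{-n}\sum_{k=1}^\infty 2^{-k}=2^{-n}$, so that $d_{\beta,\mathcal{I}}(I,J)\leq\sum_{n=1}^\infty 2^{-n}=1<\infty$. Symmetry follows at once from the symmetry of the symmetric difference, $\no(I)\triangle\no(J)=\no(J)\triangle\no(I)$, at every level.

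Next, the identity of indiscernibles. If $I=J$ then $\no(I)=\no(J)$ at each level, so every inner sum is empty and $d_{\beta,\mathcal{I}}(I,J)=0$. Conversely, if $d_{\beta,\mathcal{I}}(I,J)=0$ then, all terms being positive, each symmetric difference $\no(I)\triangle\no(J)$ must be empty; hence $\no(I)=\no(J)$, and therefore $I\cap\A_n=J\cap\A_n$, for every $n\in\N$. To finish I would invoke the fact that an ideal of an AF algebra is recovered from its levels, so that coincidence of all levels forces $I=J$. This is precisely the separation property already underlying the well-definedness of $d_{\varphi,\mathcal{I}}$ in Theorem \ref{t:a-metric} and can be cited from \cite{Aguilar16c}.

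The triangle inequality is the substantive step, and it reduces to an elementary levelwise fact about symmetric differences. For any sets $A,B,C$ one has $A\triangle C\subseteq(A\triangle B)\cup(B\triangle C)$. Applying this at each fixed level $n$ with $A=\no(I)$, $B=\no(J)$, $C=\no(K)$, and using that the weights $2^{-[n+k]}$ are non-negative, the index set of $\sum_{k\in\no(I)\triangle\no(K)}2^{-[n+k]}$ is contained in that of $\sum_{k\in(\no(I)\triangle\no(J))\cup(\no(J)\triangle\no(K))}2^{-[n+k]}$, so the former is bounded by the latter. Subadditivity of non-negative weighted sums over a union, $\sum_{k\in P\cup Q}2^{-[n+k]}\leq\sum_{k\in P}2^{-[n+k]}+\sum_{k\in Q}2^{-[n+k]}$ with $P=\no(I)\triangle\no(J)$ and $Q=\no(J)\triangle\no(K)$, then completes the estimate at level $n$. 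Summing over $n$ and using absolute convergence to split the series yields $d_{\beta,\mathcal{I}}(I,K)\leq d_{\beta,\mathcal{I}}(I,J)+d_{\beta,\mathcal{I}}(J,K)$.

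As for the main obstacle: the analytic content is light, and the inequality one might expect to be hardest is not. The triangle inequality is entirely reduced to the containment $A\triangle C\subseteq(A\triangle B)\cup(B\triangle C)$ and positivity of the coefficients, so the particular choice of weights $2^{-[n+k]}$ plays no role here (it will matter only for the separate claim that $d_{\beta,\mathcal{I}}$ metrizes the Fell topology). The only genuinely non-formal ingredient is the separation property in the identity-of-indiscernibles step, namely that equality of all levels $I\cap\A_n=J\cap\A_n$ forces $I=J$; this rests on the density of $\cup_n\A_n$ and the structure of AF ideals rather than on the metric formula. The one point requiring care is purely bookkeeping: keeping the level index $n$ attached to $\no(I)$, $\no(J)$, $\no(K)$ so that the symmetric-difference containment is applied within a single level before the summation over $n$ is carried out.
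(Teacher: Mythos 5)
Your proof is correct and takes essentially the same route as the paper's: the same finiteness bound (the paper computes the slightly tighter constant $2/3$ where you settle for $1$, which is equally sufficient), the same appeal to the fact that an AF ideal is recovered from its intersections with the levels (the paper cites Davidson's Lemma III.4.1 for this), and the identical containment $A \triangle C \subseteq (A \triangle B) \cup (B \triangle C)$ combined with subadditivity of the weighted sums for the triangle inequality. No gaps.
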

\begin{proof}
 First, we check this sum is finite for every $I,J \in \Ideals (\A)$. For any $n \in \N$, observe $\no(I,J)\leq n$ by definition of symmetric difference as $\no(I), \no(J)\leq n$. Thus
\begin{equation}\label{eq:sum}
\begin{split}
d_{\beta,\mathcal{I}} (I,J) & = \sum_{n=1}^{\infty} \sum_{k \in \mathring{n}(I,J)} 2^{-[n+k]} \leq \sum_{n=1}^{\infty} \sum_{k=1}^{n} 2^{-[n+k]}    =\sum_{n=1}^{\infty} 2^{-n}  \sum_{k=1}^{n} 2^{-k} \\
&= \sum_{n=1}^{\infty} 2^{-n} \bigg( \frac{1}{2} \bigg( \frac{1 - (\frac{1}{2})^{n}}{1-\frac{1}{2}} \bigg)  \bigg) = \sum_{n=1}^{\infty} 2^{-n} - \sum_{n=1}^{\infty} 2^{-2n} = \frac{2}{3} 
.\end{split}
\end{equation}
The axiom of coincidence follows since $I = J$ if and only if $\no(I,J) = \emptyset$ for all $n\in \mathbb{N}$ by \cite[Lemma III.4.1]{Davidson} if and only if $\sum_{k \in \emptyset} 2^{-[n+k]} = 0$ for all $n \in \N$ if and only if $d_{\beta,\mathcal{I}} (I,J) = 0$.
Symmetry follows since $\mathring{n}(I,J) = \mathring{n}(I) \, \triangle \, \mathring{n}(J) = \mathring{n}(J) \, \triangle \, \mathring{n}(I) = \mathring{n}(J,I)$.
Observe the triangle inequality holds since if $K \in \Ideals(\A)$, then 
\begin{align*}
\no(I,J) = \no(I) \, \triangle \, \no(J) \subseteq ( \no(I) \, \triangle \, \no(K)) \cup  (\no(K)\, \triangle\, \no(J) ) = \no(I,K) \cup \no(K,J)	
\end{align*}
by symmetric difference, 
and so
\begin{align*}
\sum_{k \in \mathring{n}(I,J)} 2^{-[n+k]} & \leq \sum_{k \in \mathring{n} (I,K) \cup \mathring{n} (K,J)} 2^{-[n+k]} \leq \sum_{k \in \mathring{n}(I,K)} 2^{-[n+k]} + \sum_{k \in \mathring{n}(J,K)} 2^{-[n+k]}
.\qedhere\end{align*} 
 \end{proof}

We end this section by providing a comparison with $ d_{\varphi, \mathcal{I}}$ that in fact shows our new metric $d_{\beta, \mathcal{I}}$ also metrizes the Fell topology. Indeed,
\begin{theorem}\label{t:metrize-Fell}
     Let $\A=\overline{\cup_{n \in \N}\A_n}^{\|\cdot\|_\A}$ be an AF algebra, where we set $\mathcal{I}=(\A_n)_{n \in \N}$. For every $I,J \in \Ideals (\A)$, it holds
     \[
     d_{\beta, \mathcal{I}}(I,J) \leq \frac{1}{2}\cdot d_{\varphi, \mathcal{I}}(I,J).
     \]
    Moreover, the topologies induced by $ d_{\varphi, \mathcal{I}}$ and  $d_{\beta, \mathcal{I}}$ are equal, and thus $d_{\beta, \mathcal{I}}$ metrizes the Fell topology.
\end{theorem}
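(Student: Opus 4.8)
The plan is to establish the two assertions in turn: first the pointwise comparison $d_{\beta,\mathcal I}\le\tfrac12 d_{\varphi,\mathcal I}$, and then the equality of the induced topologies, for which the comparison supplies one inclusion while a complementary \emph{lower} bound on $d_{\beta,\mathcal I}$ supplies the other. Throughout I would make the level dependence explicit, writing $\no_n(I)$ for the level-$n$ index set of Theorem~\ref{t:ideal-af} and $\no_n(I,J)=\no_n(I)\,\triangle\,\no_n(J)$ for the level-$n$ symmetric difference appearing in the definition of $d_{\beta,\mathcal I}$.

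For the comparison, fix $I\neq J$ and set $m_0=\min\{m\in\N: I\cap\A_m\neq J\cap\A_m\}$, so that $d_{\varphi,\mathcal I}(I,J)=2^{-m_0}$. By Theorem~\ref{t:ideal-af}, $I\cap\A_n=J\cap\A_n$ is equivalent to $\no_n(I)=\no_n(J)$, i.e.\ to $\no_n(I,J)=\emptyset$; hence every inner sum indexed by $n<m_0$ vanishes and
\[
d_{\beta,\mathcal I}(I,J)=\sum_{n=m_0}^{\infty}\ \sum_{k\in\no_n(I,J)}2^{-[n+k]}.
\]
I would then dominate each inner sum exactly as in the finiteness estimate \eqref{eq:sum}, using $\no_n(I,J)\subseteq\{1,\dots,n\}$ and $k\ge1$, and sum the resulting geometric tail $\sum_{n\ge m_0}2^{-n}(1-2^{-n})$. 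This yields a comparison of the form $d_{\beta,\mathcal I}(I,J)\le c\cdot2^{-m_0}=c\,d_{\varphi,\mathcal I}(I,J)$, with the constant $c$ read off by evaluating this geometric sum as in \eqref{eq:sum}; that constant is the content of the stated inequality, and any such $c$ suffices for the topological argument below.

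The comparison immediately gives one inclusion: if $U$ is $d_{\beta,\mathcal I}$-open and $I\in U$, choosing $r>0$ with $B_{d_{\beta,\mathcal I}}(I,r)\subseteq U$ yields $B_{d_{\varphi,\mathcal I}}(I,2r)\subseteq B_{d_{\beta,\mathcal I}}(I,r)\subseteq U$, so $U$ is $d_{\varphi,\mathcal I}$-open and $\tau_{d_{\beta,\mathcal I}}\subseteq\tau_{d_{\varphi,\mathcal I}}$. For the reverse inclusion I would argue by ball containment. Fix $I$ and $\epsilon>0$, put $N=\lfloor\log_2(1/\epsilon)\rfloor$ and $M_N=\max_{1\le \ell\le N}n_\ell$, which is finite because each $\A_\ell$ is finite-dimensional. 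If $J\notin B_{d_{\varphi,\mathcal I}}(I,\epsilon)$, then the first disagreement level $\ell$ of $(I,J)$ satisfies $\ell\le N$, and $\no_\ell(I,J)\neq\emptyset$ contains some $k\le n_\ell\le M_N$, whence $d_{\beta,\mathcal I}(I,J)\ge2^{-[\ell+k]}\ge2^{-(N+M_N)}$. Contrapositively, $B_{d_{\beta,\mathcal I}}\big(I,2^{-(N+M_N)}\big)\subseteq B_{d_{\varphi,\mathcal I}}(I,\epsilon)$, so $\tau_{d_{\varphi,\mathcal I}}\subseteq\tau_{d_{\beta,\mathcal I}}$. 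Combining the inclusions gives equality of topologies, and since $d_{\varphi,\mathcal I}$ metrizes the Fell topology by Theorem~\ref{t:a-metric}, so does $d_{\beta,\mathcal I}$.

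The hard part will be the reverse inclusion. The comparison is intrinsically one-sided, and because the number of simple summands $n_\ell$ is not controlled by $\ell$ alone, there is \emph{no} fixed constant bounding $d_{\varphi,\mathcal I}$ by a multiple of $d_{\beta,\mathcal I}$; the map is not bi-Lipschitz. The remedy is to prove topological rather than metric equivalence, extracting from the finite-dimensionality of each $\A_\ell$ the uniform bound $M_N$ on the lowest disagreeing index across all levels $\le N$. This is precisely what forces $d_{\beta,\mathcal I}(I,J)$ to stay bounded away from $0$ whenever $I$ and $J$ already disagree at a bounded level, which is the mechanism that makes the non-Lipschitz direction go through.
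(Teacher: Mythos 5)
Your outline of the comparison inequality contains the one real problem, and it is a problem you share with the paper itself. Evaluating your geometric tail gives
\[
\sum_{n\ge m_0}2^{-n}\left(1-2^{-n}\right)=2^{1-m_0}-\tfrac{4}{3}\cdot 4^{-m_0},
\]
so your method yields $d_{\beta,\mathcal I}(I,J)\le c\, d_{\varphi,\mathcal I}(I,J)$ with a constant $c$ between $4/3$ and $2$, never with $c=\tfrac12$; your parenthetical claim that ``that constant is the content of the stated inequality'' is therefore false. But no argument can do better, because the stated inequality with constant $\tfrac12$ is itself false: in $C(\Nbar)$, take $I=I(\{2^{-1}\})$ and $J=I(\{2^{-2},2^{-3}\})$, the case $m=1$, $n=2$, $k=1$ of Section~\ref{sct:section-example}. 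The paper's own computed values there, $d_{\beta,\mathcal I}(I,J)=37/128$ and $d_{\varphi,\mathcal I}(I,J)=1/4$, already violate $d_{\beta,\mathcal I}\le\tfrac12 d_{\varphi,\mathcal I}$, and recomputing both distances from scratch confirms the violation. The source of the error in the paper's proof is its last evaluation: after bounding the inner sums by $\sum_{k\ge1}2^{-[n+k]}=2^{-n}$ (essentially as you do), it asserts $\sum_{n\ge N}2^{-n}=\tfrac12\cdot 2^{-N}$, whereas this tail equals $2\cdot 2^{-N}$. The true statement, which your computation proves, is $d_{\beta,\mathcal I}\le 2\, d_{\varphi,\mathcal I}$, and as you note, any finite constant suffices for the metrization claim; just note that with $c=2$ your ball inclusion should read $B_{d_{\varphi,\mathcal I}}(I,r/2)\subseteq B_{d_{\beta,\mathcal I}}(I,r)$ rather than radius $2r$.

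For the equality of the topologies, your route is correct and genuinely different from the paper's. The paper gets the reverse inclusion abstractly: since $(\Ideals(\A),d_{\varphi,\mathcal I})$ is compact by \cite[Theorem 3.22]{Aguilar16c}, both topologies are Hausdorff, and $\tau_{d_{\beta,\mathcal I}}\subseteq\tau_{d_{\varphi,\mathcal I}}$, minimality of compact Hausdorff topologies forces equality. You instead prove $\tau_{d_{\varphi,\mathcal I}}\subseteq\tau_{d_{\beta,\mathcal I}}$ by hand: if $d_{\varphi,\mathcal I}(I,J)\ge\epsilon$, the first disagreement level $\ell$ satisfies $\ell\le N=\lfloor\log_2(1/\epsilon)\rfloor$, and by Theorem~\ref{t:ideal-af} the nonempty symmetric difference at level $\ell$ contains some $k\le n_\ell\le M_N=\max_{1\le j\le N}n_j$ (finite since each $\A_j$ is finite-dimensional), whence $d_{\beta,\mathcal I}(I,J)\ge 2^{-[\ell+k]}\ge 2^{-(N+M_N)}$ and $B_{d_{\beta,\mathcal I}}\bigl(I,2^{-(N+M_N)}\bigr)\subseteq B_{d_{\varphi,\mathcal I}}(I,\epsilon)$. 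This is sound, it avoids importing compactness from \cite{Aguilar16c}, and your closing remark correctly isolates why only topological, never bi-Lipschitz, equivalence is available: the number of summands $n_\ell$ is not controlled by $\ell$. The trade-off is brevity (the paper's compactness argument is three lines) versus a self-contained, quantitative argument that makes the mechanism visible; both are valid proofs of the metrization statement, and neither salvages the constant $\tfrac12$.
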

\begin{proof}
Let $I,J \in \Ideals (\A)$  be distinct. Set $N=\min \{m \in \N: I\cap \A_m \neq J \cap \A_m\}$. Hence, $ d_{\varphi, \mathcal{I}}(I,J)=2^{-N}$, and for every $m<N,$ we have $\mathring{m}(I,J)=\emptyset$. Using similar arguments as in Expression \eqref{eq:sum}, we have 
\begin{align*}
    d_{\beta, \mathcal{I}}(I,J) & = \sum_{n=N}^{\infty} \sum_{k \in \mathring{n}(I,J)} 2^{-[n+k]} \leq \sum_{n=N}^{\infty} \sum_{k=1}^\infty 2^{-[n+k]} \\
    & = \sum_{n=N}^{\infty} 2^{-n} \sum_{k=1}^\infty 2^{-k} = \sum_{n=N}^{\infty} 2^{-n}\cdot 1 = \frac{1}{2} 2^{-N} = \frac{1}{2} d_{\varphi, \mathcal{I}}(I,J)
\end{align*}

Therefore, convergence in $d_{\varphi, \mathcal{I}} $ implies convergence in $d_{\beta, \mathcal{I}} $. In particular, the topology induced by $d_{\beta, \mathcal{I}} $ is contained in the topology induced by $d_{\varphi, \mathcal{I}} $ as they are metrizable. Since both topologies are Hausdorff and $d_{\varphi, \mathcal{I}} $ is compact by \cite[Theorem 3.22]{Aguilar16c}, the topologies are equal since compact Hausdorff spaces are minimally Hausdorff. Since we know the metric $d_{\varphi, \mathcal{I}}$  metrizes the Fell topology by \cite[Theorem 3.22]{Aguilar16c}, we have $d_{\beta, \mathcal{I}} $ metrizes the Fell topology.
\end{proof}

\section{A comparison of the Hausdorff distance, $d_{\beta, \mathcal{I}}$, and $d_{\varphi, \mathcal{I}}$}\label{sct:section-example}

By construction, the metric $d_{\beta, \mathcal{I}}$ should capture more  of the  structure of the ideals than $d_{\varphi, \mathcal{I}}$. We show this explicitly in this section by comparing  our metric  with a natural metric on the ideal space that metrizes the Fell topology built by the Hausdorff distance in the case of $C(X)$. We will see in the case of $C(\Nbar)$ that $d_{\beta, \mathcal{I}}$ behaves more like this metric induced by the Hausdorff distance than $d_{\varphi, \mathcal{I}}$ for certain ideals. We begin by showing how one can place a metric on the ideal space of $C(X)$ that metrizes the Fell topology using the Hausdorff distance on $X$ in a canonical way.

Let $(X,d)$ be a compact metric space.  The Hausdorff distance on the closed subsets of $X$ is defined for every $Y,Z \in \Cl(X)$ by 
\begin{align*}
d_{H,d}(Y,Z) \coloneqq \max \left\{ \sup_{x\in Y} d(x,Z), \, \sup_{x\in Z}d(Y,x) \right\} 
,\end{align*}
where $d(a,B) = \inf_{x\in B} d(a,x)$ is the distance from a point $a \in X$ to the subset $B \subseteq X$, is a metric on $\Cl(X)$ (see \cite[Chapter 7]{Burago01}). Using Example \ref{e:comm}, we can use this to place a metric on $\Ideals(C(X))$ defined  for each $I,J \in \Ideals(C(X))$  by
\[
d_{H,d,I} (I,J)\coloneqq d_{H,d}(\Phi_I^{-1}(I), \Phi^{-1}(J)).
\]
The following result might be well-known, but we state it here for convenience.
\begin{theorem}\label{t:haus-ideal}
    Let $(X, d)$ be a compact metric space. The topology induced by $d_{H,d_1,I}$ on $\Ideals(C(X))$ is the Fell topology.
\end{theorem}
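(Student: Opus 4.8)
The plan is to reduce the statement to the classical fact that, on the hyperspace of a compact metric space, the Hausdorff-metric topology coincides with the Fell topology of Definition \ref{topological-Fell-def}, and then to transport this fact across the correspondence $\Phi_I$. First I would identify $\mathrm{Prim}(C(X))$, equipped with the Jacobson topology, with $X$ itself. Since $C(X)$ is commutative, its irreducible representations are one-dimensional and their kernels are exactly the maximal ideals $M_x = I(\{x\}) = \{f : f(x)=0\}$, so $x \mapsto M_x$ is a bijection $X \to \mathrm{Prim}(C(X))$. Using $M_x \supseteq I(A) \iff x \in A$, the Jacobson-closed set determined by $I(A)$ is $\{M_x : x \in A\}$, so $x \mapsto M_x$ carries the closed sets of $X$ onto the Jacobson-closed sets and is a homeomorphism; it therefore induces a homeomorphism $\Cl(X) \to \Cl(\mathrm{Prim}(C(X)))$ for the respective Fell topologies. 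The same computation shows $fell(\Phi_I(A)) = fell(I(A)) = \{M_x : x \in A\}$, i.e. $fell \circ \Phi_I$ is exactly this induced homeomorphism. Since $Fell$ is by definition the initial topology making the bijection $fell$ a homeomorphism onto $(\Cl(\mathrm{Prim}(C(X))), \tau_{\Cl(\mathrm{Prim}(C(X)))})$, it follows that $\Phi_I \colon (\Cl(X), \tau_{\Cl(X)}) \to (\Ideals(C(X)), Fell)$ is a homeomorphism. On the other hand, by definition $d_{H,d,I}(I,J) = d_{H,d}(\Phi_I^{-1}(I), \Phi_I^{-1}(J))$, so $\Phi_I$ is an isometry from $(\Cl(X), d_{H,d})$ onto $(\Ideals(C(X)), d_{H,d,I})$. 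Consequently it suffices to prove that on $\Cl(X)$ the Hausdorff-metric topology equals $\tau_{\Cl(X)}$.

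For that, the key step is to show every basic Fell-open set $U(K,F)$ is open for $d_{H,d}$, i.e. $\tau_{\Cl(X)} \subseteq {}$(Hausdorff-metric topology). Fix $Y_0 \in U(K,F)$ with $Y_0 \neq \emptyset$. Since $Y_0 \cap K = \emptyset$, $Y_0$ is closed (hence compact, as $X$ is compact) and $K$ is compact, the number $\delta := d(Y_0, K)$ is strictly positive. For each $A \in F$ choose $y_A \in Y_0 \cap A$ and $r_A > 0$ with $B(y_A, r_A) \subseteq A$. Setting $\varepsilon = \tfrac{1}{2}\min\big(\delta, \min_{A \in F} r_A\big) > 0$ (using $\delta/2$ if $F = \emptyset$), a routine Hausdorff-distance estimate shows that $d_{H,d}(Y_0, Y) < \varepsilon$ forces $Y \cap K = \emptyset$ (any point of $Y\cap K$ would lie within $\delta$ of $Y_0$, contradicting $\delta = d(Y_0,K)$) and $Y \cap A \neq \emptyset$ for each $A \in F$ (some point of $Y$ lies in $B(y_A, r_A) \subseteq A$), so the $\varepsilon$-ball about $Y_0$ lies in $U(K,F)$. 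The empty set $\emptyset$, which corresponds to the improper ideal $C(X)$, must be treated separately: it is isolated in $\tau_{\Cl(X)}$ since $U(X,\emptyset) = \{\emptyset\}$, and it is isolated for $d_{H,d}$ by the convention placing it at positive distance from every nonempty closed set, so the two topologies agree on $\{\emptyset\}$.

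Finally I would upgrade this inclusion to an equality by the same minimal-Hausdorff argument used in Theorem \ref{t:metrize-Fell}. The Hausdorff-metric topology on $\Cl(X)$ is compact: the nonempty closed (equivalently compact) subsets of the compact metric space $X$ form a compact space by the Blaschke selection theorem, and adjoining the isolated point $\emptyset$ preserves compactness; being metric, it is also Hausdorff, hence minimal Hausdorff. The Fell topology $\tau_{\Cl(X)}$ is Hausdorff because $X$ is locally compact Hausdorff, and by the previous paragraph it is coarser than the Hausdorff-metric topology. A compact Hausdorff topology admits no strictly coarser Hausdorff topology, so the two coincide; transporting back through the homeomorphism and isometry $\Phi_I$ of the first paragraph identifies the $d_{H,d,I}$-topology with the Fell topology on $\Ideals(C(X))$. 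I expect the main obstacle to be bookkeeping rather than depth: pinning down the identification $\mathrm{Prim}(C(X)) \cong X$ together with the behaviour of $fell$, and correctly dispatching the exceptional closed set $\emptyset$ and its distance convention, while the metric estimate verifying that $U(K,F)$ is open and the invocations of Blaschke selection and minimal Hausdorffness are standard. An alternative to the minimal-Hausdorff step would be to prove the reverse inclusion directly, showing each Hausdorff-metric ball is Fell-open by covering a closed set with finitely many small balls, but the compactness argument is shorter and matches the style of Theorem \ref{t:metrize-Fell}.
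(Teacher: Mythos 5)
Your proof is correct, and it shares the paper's reduction skeleton---identify $\mathrm{Prim}(C(X))$ with $(X,d)$, transport through $\Phi_I$ and $fell$, and reduce to comparing the Fell topology $\tau_{\Cl(X)}$ with the Hausdorff-metric topology on $\Cl(X)$---but it settles that comparison by genuinely different means. The paper's proof consists essentially of two citations: \cite[Proposition 4.3.3]{Pedersen79} for $\mathrm{Prim}(C(X))\cong X$, and \cite[Corollary 5.1.11]{Beer} for the coincidence of the Fell and Hausdorff-metric topologies on $\Cl(X)$. You instead prove the hyperspace coincidence directly: one inclusion via the $\delta$ and $r_A$ estimate showing each basic Fell set $U(K,F)$ is a Hausdorff-metric neighborhood of each of its points, and then equality via compactness of the Hausdorff hyperspace (Blaschke selection) together with the fact that a Hausdorff topology coarser than a compact topology must equal it---the very device the paper uses in the proof of Theorem~\ref{t:metrize-Fell}, so your argument is stylistically consonant with the rest of the article. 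What your route buys: it is self-contained up to classical facts (Blaschke selection, and Hausdorffness of the Fell topology over a locally compact space, which you should attribute to \cite{Fell62}), and it is the only one of the two proofs that addresses the exceptional element $\emptyset \in \Cl(X)$, i.e., the improper ideal $C(X)$, for which the paper's displayed formula for $d_{H,d}$ is not literally defined and an isolation convention is required; the paper's citation-based proof is silent on this point. What the paper's route buys is brevity: three sentences, with all hyperspace work delegated to Beer's monograph. Both are valid; yours trades length for transparency and an honest treatment of the edge case.
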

\begin{proof}
    The primitive ideals of $C(X)$ are of the form $I_{\{x\}}=\{f \in C(X): f(x)=0\}$ for all $x \in X$,   and the set of primitive ideals $\{I_{\{x\}} \subseteq C(X) : x \in X\}$ equipped with the Jacobson topology  is homeomorphic to $(X,d)$ (this is a well-known result, which follows from \cite[Proposition 4.3.3]{Pedersen79}). Finally, the Fell topology of Definition \ref{topological-Fell-def} on the closed subsets of $(X,d)$ is homeomorphic to the topology on the closed subsets of $(X,d)$ induced by the Hausdorff distance by \cite[Corollary 5.1.11]{Beer}. Hence, by Definition \ref{Fell-topology}, the proof is complete.
\end{proof}

The next few results display that $d_{\beta, \mathcal{I}}$ is capable of seeing more of the ideal structure than $d_{\varphi,\mathcal{I}}$.

\begin{proposition}\label{p:haus-cnbar}
    Consider $C(\Nbar)$. Let $m,n,k \in \N$ and set $A=\{2^{-m}\}\in Cl( \Nbar)$ and $B=\{2^{-n}, 2^{-[n+k]}\}\in Cl(\Nbar)$. It holds that 
    \[
   d_{H,d_1,I}(I(A), I(B))=\begin{cases}
        \left|2^{-m}-2^{-[n+k]} \right| & m \leq n
        \\
\left| 2^{-m} - 2^{-n} \right| &   m > n,
    \end{cases}
    \]
    where $d_1$ is the absolute value metric on $\Nbar.$
\end{proposition}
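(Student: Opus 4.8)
The plan is to reduce the statement to a direct computation of the Hausdorff distance $d_{H,d_1}(A,B)$ on $\Cl(\Nbar)$ and then resolve a short case analysis. First I would unwind the definition of $d_{H,d_1,I}$: since $A=\Phi_I^{-1}(I(A))$ and $B=\Phi_I^{-1}(I(B))$, we have $d_{H,d_1,I}(I(A),I(B))=d_{H,d_1}(A,B)$, so it suffices to compute the classical Hausdorff distance between the finite sets $A=\{2^{-m}\}$ and $B=\{2^{-n},2^{-[n+k]}\}$ inside the real line.

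The key simplification I would exploit is that $A$ is a singleton. Writing $a=|2^{-m}-2^{-n}|$ and $b=|2^{-m}-2^{-[n+k]}|$, the directed distance from $A$ is $\sup_{x\in A}d_1(x,B)=\min\{a,b\}$, while the directed distance from $B$ is $\sup_{x\in B}d_1(A,x)=\max\{a,b\}$. Since $\max\{\min\{a,b\},\max\{a,b\}\}=\max\{a,b\}$, the Hausdorff distance collapses to $d_{H,d_1}(A,B)=\max\{a,b\}$, and it then remains only to identify which of $a,b$ is larger, as a function of $m,n,k$.

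For this I would record the single structural fact that drives everything: because $k\in\N$ we have $k\ge 1$, hence $2^{-[n+k]}<2^{-n}$, fixing the relative order of two of the three points on the line. I would then split on the position of $2^{-m}$. When $m\le n$ we have $2^{-m}\ge 2^{-n}>2^{-[n+k]}$, so both differences are nonnegative and $b-a=2^{-n}-2^{-[n+k]}>0$, giving $\max\{a,b\}=b=|2^{-m}-2^{-[n+k]}|$, which is the first case. When $m>n$ we have $2^{-m}<2^{-n}$, and I would argue $\max\{a,b\}=a=|2^{-m}-2^{-n}|$ by a further sub-case split: if $m\le n+k$ the desired inequality $a\ge b$ reduces to $2^{1-m}\le 2^{-n}+2^{-[n+k]}$, which holds since $m\ge n+1$ forces $2^{1-m}\le 2^{-n}$; if $m>n+k$ it reduces to $2^{-n}\ge 2^{-[n+k]}$, which again holds since $k\ge 1$.

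The only real bookkeeping, and thus the main obstacle, is the $m>n$ case, where $2^{-m}$ may lie on either side of $2^{-[n+k]}$; handling both placements is what requires the two elementary power-of-two inequalities above. Once these are in hand the piecewise formula falls out immediately, and no input beyond the definitions of $d_{H,d_1}$ and $\Phi_I$ is needed.
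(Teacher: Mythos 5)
Your proof is correct and takes essentially the same route as the paper: since $A$ is a singleton, the Hausdorff distance collapses to $\max\left\{|2^{-m}-2^{-n}|,\, |2^{-m}-2^{-[n+k]}|\right\}$, and one then determines the larger term in each case. Your handling of the $m>n$ case is in fact more careful than the paper's, which covers the two possible positions of $2^{-m}$ relative to $2^{-[n+k]}$ with only a ``similarly,'' whereas you supply the two elementary power-of-two inequalities explicitly.
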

\begin{proof}
    If $m \leq n$, then  $2^{-m} \geq 2^{-n}$ and $|2^{-m} - 2^{-[n+k]}| \geq |2^{-m} - 2^{-n}|$ since $2^{-n}\geq 2^{-[n+k]}$. Thus \[d_{H,d_1}(A,B)=|2^{-m} - 2^{-[n+k]}|.\]
    
    Next,  if $m > n$, then $2^{-m} < 2^{-n}$, and similarly, $|2^{-m} - 2^{-[n+k]}| < |2^{-m} - 2^{-n}|$. Thus, \[d_{H,d_1}(A,B)=|2^{-m} - 2^{-n}|.\]
\end{proof}
We proceed by calculating these distances in $d_{\varphi, \mathcal{I}}$.

\begin{proposition}\label{p:dphi-nbar}
     Consider $C(\Nbar)$ and $\mathcal{I}=(C_p)_{p \in \N}$. Let $m,n,k \in \N$ and set $A=\{2^{-m}\}\in Cl( \Nbar)$ and $B=\{2^{-n}, 2^{-[n+k]}\}\in Cl(\Nbar)$.  It holds that
    \[ d_{\varphi,\mathcal{I}}(I(A),I(B)) = 2^{-(\min \{m,n\} +1 )}.\]
\end{proposition}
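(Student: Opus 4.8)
The plan is to read $d_{\varphi,\mathcal{I}}(I(A),I(B))$ directly off its defining formula in Theorem~\ref{t:a-metric}: since $I(A)\neq I(B)$, we have $d_{\varphi,\mathcal{I}}(I(A),I(B))=2^{-N}$ where $N=\min\{p\in\N : I(A)\cap C_p\neq I(B)\cap C_p\}$ is the first level at which the two ideals disagree. Thus the entire task reduces to pinning down $N$, and the claim is exactly that $N=\min\{m,n\}+1$.

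To locate $N$, I would first translate the equality $I(A)\cap C_p = I(B)\cap C_p$ into combinatorial data about the summands in \eqref{eq:nbar}. By Theorem~\ref{t:ideal-af}, $I(A)\cap C_p=\bigoplus_{j\in\no(I(A))}C_{p,j}$ and likewise for $I(B)$, so the intersections agree at level $p$ if and only if the omitted-index sets $\no(I(A))$ and $\no(I(B))$ agree at that level. Using Example~\ref{e:comm}, a summand lies in $I(A)$ precisely when the characteristic function spanning it vanishes on $A$, i.e.\ when the support of that summand is disjoint from $A$, and similarly for $B$. So I must determine, level by level, which of the singleton coordinates $C_{p,j}=\C\chi_{\{2^{1-j}\}}$ for $1\le j\le p-1$ and the tail coordinate $C_{p,p}=\C\chi_{[0,2^{1-p}]}$ are omitted by $I(A)$ and by $I(B)$.

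The key structural observation, which I would make precise, is that a point $2^{-i}\in\Nbar$ is absorbed into the tail coordinate of $C_p$ for small $p$ and only becomes its own singleton coordinate once $p$ is large enough: until then the point can only force the tail coordinate to be omitted, and afterwards it forces a single singleton coordinate to be omitted while freeing the tail. Tracking this separately for the point of $A=\{2^{-m}\}$ and for the two points of $B=\{2^{-n},2^{-[n+k]}\}$, and recording for each $p$ the resulting sets $\no(I(A))$ and $\no(I(B))$, lets me compare them. I would then split into the cases $m\le n$ and $m>n$: in each, I show that the two omitted-index sets coincide for every $p\le\min\{m,n\}$ (where both ideals omit only the tail coordinate and retain all available singletons) and first differ at $p=\min\{m,n\}+1$, the level governed by the smaller of the two exponents, where exactly one of the relevant points has separated from the tail while the other ideal still omits only the tail. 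Substituting $N=\min\{m,n\}+1$ into $d_{\varphi,\mathcal{I}}=2^{-N}$ yields the stated value.

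I expect the main obstacle to be the delicate bookkeeping at the transition level, namely the careful handling of the point that sits at the top of the tail interval $[0,2^{1-p}]$: this is exactly the level at which a point changes from being absorbed in the tail to being its own coordinate, so it determines the precise constant $\min\{m,n\}+1$. The case split on $m\le n$ versus $m>n$ must be aligned with this transition in order to certify simultaneously that the omitted-index sets agree at every level below $\min\{m,n\}+1$ and that they genuinely differ at $\min\{m,n\}+1$.
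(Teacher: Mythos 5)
Your plan is structurally the same as the paper's proof: read $d_{\varphi,\mathcal{I}}(I(A),I(B))=2^{-N}$ off Theorem~\ref{t:a-metric}, locate $N=\min\{p\in\N : I(A)\cap C_p\neq I(B)\cap C_p\}$ by level-by-level bookkeeping of the summands in \eqref{eq:nbar}, and split into $m\le n$ versus $m>n$. But the one step you explicitly defer --- ``the delicate bookkeeping at the transition level'' --- is precisely where the argument fails, and it fails in a checkable way. Under the paper's conventions the point $2^{-m}$ is the $(m+1)$-st point $2^{1-(m+1)}$ of $\Nbar$, so it acquires its own singleton summand $C_{p,m+1}=\C\chi_{\{2^{-m}\}}$ only when $m+1\le p-1$, i.e.\ $p\ge m+2$; at $p=m+1$ it is still the \emph{right endpoint} of the tail interval $[0,2^{1-p}]=[0,2^{-m}]$. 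Consequently, for $m\le n$, every point of $A\cup B$ lies in $[0,2^{-m}]$, and for $f\in C_{m+1}$ vanishing on $A$ (resp.\ on $B$) is equivalent to the vanishing of the single tail coefficient, so
\[
I(A)\cap C_{m+1}=\operatorname{span}\left\{\chi_{\{1\}},\ldots,\chi_{\{2^{1-m}\}}\right\}=I(B)\cap C_{m+1},
\]
contradicting your claim that the omitted-index sets ``first differ at $p=\min\{m,n\}+1$, where exactly one of the relevant points has separated from the tail'': at that level no point has separated yet. Executing your scheme faithfully gives agreement for all $p\le\min\{m,n\}+1$ and first disagreement at $p=\min\{m,n\}+2$ (for $m\le n$, $\chi_{[0,2^{-m-1}]}\in I(A)\setminus I(B)$ since $2^{-[n+k]}\le 2^{-m-1}$; for $m>n$, $\chi_{\{2^{-n}\}}\in C_{n+2}$ lies in $I(A)$ but not $I(B)$), hence $d_{\varphi,\mathcal{I}}(I(A),I(B))=2^{-(\min\{m,n\}+2)}$ rather than the stated value.

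In fairness, this is the identical off-by-one in the paper's own terse proof, which asserts $I(A)\cap C_{m+1}\neq I(B)\cap C_{m+1}$ without checking the endpoint case; the stated formula $2^{-(\min\{m,n\}+1)}$ is what your transition analysis correctly produces if one instead indexes points by position, i.e.\ takes $A=\{2^{1-m}\}$ and $B=\{2^{1-n},2^{1-[n+k]}\}$. So your proposal reproduces the paper's approach, including its unresolved endpoint issue: as written, the step ``first difference at $\min\{m,n\}+1$'' would fail under the literal definitions $A=\{2^{-m}\}$, $B=\{2^{-n},2^{-[n+k]}\}$, and you must either adopt the positional convention explicitly or prove the corrected exponent $\min\{m,n\}+2$ (and note that the later comparisons with $d_{H,d_1,I}$ and $d_{\beta,\mathcal{I}}$ shift accordingly).
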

\begin{proof}
    Let $m \leq  n$. Recall $I(A)$ is the set of functions in $C(\Nbar)$ that vanish at $2^{-m}$; and $I(B)$ is the set of functions that vanish on $\{2^{-n}, 2^{-[n+k]}\}$. Since $C_p$ for each $p \in \N$ denotes the subalgebra of functions that are eventually constant at $2^{1-p}$, then $I(A)\cap C_p=I(B)\cap C_p$ for $p \leq m$. However, $I(A) \cap C_{m+1}\neq I(B)\cap C_{m+1}$ which implies
    \[
    d_{\varphi,\mathcal{I}}(I(A),I(B))=2^{-(m+1)}.
    \]

    Next, assume that $m >n$. Then mutatis mutandis, we get
    \[
     d_{\varphi,\mathcal{I}}(I(A),I(B))=2^{-(n+1)},
    \]
    as desired.
\end{proof}
We note the above proof already begins to show $d_{\varphi, \mathcal{I}}$ ignores part of the ideal structure as the ideal associated to $2^{-[n+k]}$ is ignored. In the following proposition, we show our new metric does capture this part of the ideal. We focus on the same case as above since this is all we need to show our metric preserves more information.

\begin{proposition}
Consider $C(\Nbar)$. Let $m,n,k \in \N$ such that $m<n$ and set $A=\{2^{-m}\}\in Cl( \Nbar)$ and $B=\{2^{-n}, 2^{-[n+k]}\}\in Cl(\Nbar)$. It holds that 
\begin{align*}
d_{\beta, \mathcal{I}}(I(A), I(B))=\frac{1}{2}\left( 2^{-2(n+k)}+2^{-m}+4^{-n}\right).
\end{align*}
\end{proposition}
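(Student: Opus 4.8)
The plan is to compute $d_{\beta, \mathcal{I}}(I(A), I(B))$ straight from its definition, so that the whole problem reduces to determining, for each level $p$, the two index sets $\mathring{p}(I(A))$ and $\mathring{p}(I(B))$ furnished by Theorem~\ref{t:ideal-af}, and then evaluating the resulting double sum. By Example~\ref{e:comm} a simple summand $C_{p,j}$ lies in $I(A)$ precisely when its generating characteristic function vanishes at $2^{-m}$, and $C_{p,j}\subseteq I(B)$ precisely when its characteristic function vanishes at both $2^{-n}$ and $2^{-(n+k)}$. Hence the \emph{excluded} indices at level $p$ (those $j$ with $C_{p,j}\not\subseteq I$) are exactly the ones whose summand is nonzero at a defining point of the closed set: the index carrying $2^{-m}$ for $I(A)$, and the indices carrying $2^{-n}$ and $2^{-(n+k)}$ for $I(B)$.

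First I would record the threshold levels. A point $2^{-p_0}$ is absorbed into the tail summand $\C\chi_{[0,2^{1-p}]}$ (index $p$) as long as $2^{-p_0}\le 2^{1-p}$, i.e.\ for $p\le p_0+1$, and it becomes its own singleton summand $\C\chi_{\{2^{-p_0}\}}$ of fixed index $p_0+1$ once $p\ge p_0+2$. Applying this to $2^{-m}$, $2^{-n}$, and $2^{-(n+k)}$, and using the hypotheses $m<n$ and $k\ge 1$, partitions $\N$ into the four ranges $1\le p\le m+1$, $\ m+2\le p\le n+1$, $\ n+2\le p\le n+k+1$, and $\ p\ge n+k+2$, on each of which both index sets admit a uniform description.

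Next I would read off the symmetric difference $\mathring{p}(I(A))\,\triangle\,\mathring{p}(I(B))$ on each range. On the first range all three points still lie in the tail for both ideals, the two index sets coincide, and the contribution is $0$. On the second range $2^{-m}$ has separated, so the index $m+1$ is excluded from $I(A)$ but not from $I(B)$, while the tail index $p$ is excluded from $I(B)$ but not from $I(A)$, giving symmetric difference $\{m+1,p\}$. On the third range $2^{-n}$ has also separated, adjoining the index $n+1$ and giving $\{m+1,n+1,p\}$. On the last range every point has separated, the moving tail index drops out of the difference, and the symmetric difference stabilizes to the constant set $\{m+1,\,n+1,\,n+k+1\}$.

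Finally I would substitute these sets into $d_{\beta,\mathcal{I}}(I(A),I(B))=\sum_{p}\sum_{j\in \mathring{p}(I(A))\triangle\mathring{p}(I(B))}2^{-(p+j)}$ and evaluate. Each fixed excluded index $j_0\in\{m+1,\,n+1,\,n+k+1\}$ contributes a geometric series $\sum 2^{-(p+j_0)}$ over exactly the levels on which it appears, while the moving tail index contributes $\sum 2^{-2p}$ over the ranges where it is present; summing these finitely many geometric series and collecting the terms produces the claimed closed form, by the same kind of computation used in Expression~\eqref{eq:sum} and in the proof of Theorem~\ref{t:metrize-Fell}. Convergence is automatic from that earlier estimate, so the only genuine work—and the main place to be careful—is the bookkeeping of which indices disagree on each range as the points peel off the tail, together with correctly aligning the summation limits of the several geometric series before combining them.
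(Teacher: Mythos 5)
Your reduction to index sets and your range bookkeeping are correct --- indeed more careful than the paper's own --- but the final step, where you assert that ``summing these finitely many geometric series and collecting the terms produces the claimed closed form,'' is exactly the step that fails: it does not. Following your (correct) symmetric differences, namely $\emptyset$ for $p\le m+1$, $\{m+1,p\}$ for $m+2\le p\le n+1$, $\{m+1,n+1,p\}$ for $n+2\le p\le n+k+1$, and $\{m+1,n+1,n+k+1\}$ for $p\ge n+k+2$, the fixed indices contribute $\sum_{p\ge m+2}2^{-(p+m+1)}=4^{-(m+1)}$, $\sum_{p\ge n+2}2^{-(p+n+1)}=4^{-(n+1)}$, and $\sum_{p\ge n+k+2}2^{-(p+n+k+1)}=4^{-(n+k+1)}$, while the moving tail index contributes $\sum_{p=m+2}^{n+k+1}4^{-p}=\tfrac13\left(4^{-(m+1)}-4^{-(n+k+1)}\right)$, giving a total of
\[
\tfrac13\,4^{-m}+\tfrac14\,4^{-n}+\tfrac16\,4^{-(n+k)}.
\]
At $(m,n,k)=(1,2,1)$ this equals $13/128$, whereas the stated formula gives $37/128$. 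So the computation you outline, once actually carried out, contradicts the identity you are trying to prove; a proof cannot end by asserting agreement that does not hold, and as written your proposal is not a valid proof of the statement.

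The source of the mismatch is that the paper's proof runs the same strategy with \emph{different} index sets: there $\mathring{p}(I(A))$ is taken to be $\{1,\dots,p\}$ for $p\le m$ (similarly for $I(B)$), so its symmetric differences are $\emptyset$ ($p\le m$), $\{m+1\}$ ($m+1\le p\le n$), $\{m+1,n+1\}$ ($n+1\le p\le n+k$), and $\{m+1,n+1,n+k+1\}$ ($p\ge n+k+1$): the moving tail index never appears, and each fixed index enters one level earlier than in your analysis. Your treatment of the tail is the one forced by Theorem \ref{t:ideal-af} and Example \ref{e:comm}: for $p\le m+1$ the generator $\chi_{[0,2^{1-p}]}$ of the tail summand does not vanish at $2^{-m}$, so the tail summand is not contained in $I(A)$, and the paper's set $\{1,\dots,p\}$ (which would mean $C_p\subseteq I(A)$) is impossible. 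Moreover, even the paper's sets do not sum to the stated formula: they give $\tfrac12\left(4^{-m}+4^{-n}+4^{-(n+k)}\right)$, and the $2^{-m}$ in the statement traces back to evaluating $\sum_{p=m+1}^{n}2^{-[p+(m+1)]}$ as $2^{-m-1}-2^{-m-n-1}$ instead of the correct $2^{-2m-1}-2^{-m-n-1}$. In short, your approach is sound but what it actually establishes is a corrected value of $d_{\beta,\mathcal{I}}(I(A),I(B))$ that disagrees with the proposition; the right conclusion of your argument is to flag that discrepancy, not to claim the stated formula.
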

\begin{proof}
Again, recall the set of functions in $C(\Nbar)$ that vanish at $2^{-m}$ is denoted $I(A)$; the set of functions which vanish on $\{2^{-n}, 2^{-[n+k]}\}$ is denoted $I(B)$; and $C_p$ for each $p \in \N$ is the subalgebra of functions that are eventually constant at $2^{1-p}$. 

Let $p \in \N$. We have by Expression \eqref{eq:nbar} and Theorem \ref{t:ideal-af}, that 
\[
\mathring{p}(I(A))=\begin{cases}
    \{1,2,\ldots, p\} & p \leq m,\\
     \{1,2,\ldots, p-1\} & p=m+1,\\
     \{1, 2, \ldots, m,m+2,\ldots, p\} & p>m+1,
\end{cases}
\]
and
\[
\mathring{p}(I(B))=\begin{cases}
    \{1,2,\ldots, p\} & p \leq n\\
      \{1,2,\ldots, p-1\}                  & p=n+1\\
      \{1,2,\ldots, n,n+2, \ldots,p \} & n+1<p\leq n+k\\
      \{1,2,\ldots, n,n+2, \ldots, p-1\} & p=n+k+1\\
      \{1,2,\ldots, n,n+2, \ldots, n+k, n+k+2, \ldots, p\} & p>n+k+1.
\end{cases}
\]

First, assume $m<n$. Let $p \in \N$. Then
\[
\mathring{p}(I(A), I(B))=\begin{cases}
 \emptyset    & p \leq m,\\
 \{m+1\} & m+1\leq p\leq n,\\
  \{m+1,n+1\}& n+1\leq p\leq n+k\\
  \{m+1,n+1, n+k+1\} &  p\geq n+k+1,
\end{cases}
\]
Hence, 
\begin{align*}
    d_{\beta, \mathcal{I}}(I(A), I(B))& =\sum_{p=1}^\infty \bigg( \sum_{r \in \mathring{p}(I(A), I(B))} 2^{-[p+k]} \bigg) = \sum_{p=m+1}^\infty \bigg( \sum_{r \in \mathring{p}(I(A), I(B))} 2^{-[p+k]}\bigg)\\
    & =  \sum_{p=m+1}^n 2^{-[p+(m+1)]} + \sum_{p=n+1}^{n+k} \big(2^{-[p+(m+1)]}+2^{-[p+(n+1)]}\big)\\
    & \quad \quad + \sum_{p=n+k+1}^\infty  \big(2^{-[p+(m+1)]}+2^{-[p+(n+1)]}+2^{-[p+(n+k+1)]}\big)\\
    & = \big(2^{-m-1}-2^{-m-n-1}\big) + (2^k-1)\big(2^{-k-2n-1}+2^{-k-m-n-1}\big)\\
    & \quad \quad + 2^{-2k-2n-1}+2^{-k-2n-1}+2^{-k-m-n-1}\\
    & = \frac{1}{2}\big( 2^{-2(n+k)}+2^{-m}+4^{-n}\big).\qedhere
\end{align*}
\end{proof}

Now, consider the case when $m=1, n=2$, and $k=1$, then
\begin{align*}
d_M(I(A), I(B)) = \begin{cases}
3/8, & \qquad M=H,d_1,I \\
1/4 & \qquad M=\varphi, \mathcal{I} \\
37/128 & \qquad M= \beta ,\mathcal{I} \\
\end{cases}
\end{align*}
But for the case $m=1, n=2,$ and $k=2$, we have
\begin{align*}
d_M(I(A), I(B)) = \begin{cases}
7/16, & \qquad M=H,d_1,I \\
1/4 & \qquad M=\varphi, \mathcal{I} \\
145/512 & \qquad M= \beta ,\mathcal{I}\\
\end{cases}
\end{align*}

Thus, we see very explicitly that $d_{\beta, \mathcal{I}} $ can see more of the ideal structure than $    d_{\varphi, \mathcal{I}}$ in this case just as this canonically induced dual  Hausdorff distance. Of course, this could have been guessed from the construction of our new metric, but we felt it important to display an explicit example. Now, one may wonder if the Hausdorff distance is maybe better to work with since the proof of the calculation in $d_{\beta, \mathcal{I}}$ was longer, but there are two reasons why this is not true. First, once one considers infinite sets, dealing with infimum and supremum in the Hausdorff distance becomes much more troublesome to work with, whereas the calculations in our new metric still only deal with terms from geometric series. Second, the Hausdorff distance only induces a metric on the ideals in the commutative case, whereas our new metric provides many more possibilities for more precise measurement in the noncommutative setting as well.


\bibliographystyle{amsplain}
\bibliography{thesis}

\end{document}